\documentclass{amsart}
\usepackage{amsmath}
\usepackage{amssymb}
\usepackage{amsthm}
\usepackage{tikz-cd}
\usepackage[style=numeric,sorting=nty]{biblatex}
\usepackage[english]{babel}

\title{Some Obstructions to Solvable Points on Higher Genus Curves}
\author{James Rawson}
\address{James Rawson \\ Mathematics Institute, University of Warwick, Coventry \\ United Kingdom}
\email{james.rawson@warwick.ac.uk}
\urladdr{https://warwick.ac.uk/fac/sci/maths/people/staff/rawson/}
\subjclass[2000]{11G30 (primary), 11G35, 14G05}
\keywords{Higher genus curves, solvable points, quotient varieties, rational points, solvable morphisms}

\thanks{The author is supported by the Warwick Mathematics Institute Centre for Doctoral Training, and gratefully acknowledges funding from the UK Engineering and Physical Sciences Research Council (Grant number: EP/W523793/1)}

\newtheorem{theorem}{Theorem}
\newtheorem{corollary}{Corollary}
\newtheorem{lemma}{Lemma}
\newtheorem*{question}{Question}
\newtheorem*{conjecture}{Conjecture}
\newtheorem{defn}{Definition}
\newtheorem{propn}{Proposition}

\addbibresource{solvable_points.bib}

\begin{document}
\begin{abstract}
It is known that for a curve defined over $\mathbb{Q}$ of genus $g \leq 4$, there exists a point on the curve defined over a solvable extension of $\mathbb{Q}$. We relate points on curves of genus $g \geq 5$ over solvable extensions to the Bombieri-Lang conjecture. Specifically, we show that varieties parametrising points defined over extensions with a fixed solvable Galois group are of general type. Moreover, we show the existence of certain subvarieties in these varieties imply the existence of solvable morphisms from the curve.
\end{abstract}
\maketitle
\section{Introduction}
For curves of genus at least 2, Faltings' theorem states there are only finitely many points defined over any (fixed) number field. On the other hand, points defined over the algebraic closure of $\mathbb{Q}$ are Zariski dense. Therefore, it is natural to ask about points defined over some class of number fields. One such class of classical interest is the solvable number fields, prompting the following.
\begin{question}
Given a curve $C$ defined over a number field $K$, does there exist a number field $F / K$, with solvable Galois group, such that $C(F) \neq \emptyset$?
\end{question}
P\'al proved that for any geometrically irreducible curve of genus 0, 2, 3, or 4 defined over a perfect field, there is at least one solvable point \cite{pal}, and Wiles \& \c{C}iperani proved the same for genus 1 curves over $\mathbb{Q}$, provided there are points over every completion \cite{wiles}. It is true for all $C$, when $K$ is instead a finite extension of $\mathbb{Q}_p$, since there are $\overline{\mathbb{Q}_p}$ points, and all finite extensions of $\mathbb{Q}_p$ are solvable. P\'al \cite{pal} constructed counterexamples for almost all $g \geq 5$ over local fields where the absolute Galois group of the residue field has quotients isomorphic to $S_5$, $\mathrm{PSL}_3(\mathbb{F}_2)$ and $\mathrm{PSL}_3(\mathbb{F}_3)$. The question remains open for curves defined over number fields, as local fields of the type in P\'al's construction do not arise as completions of a number field. We will show there are ``not too many'' points for a curve of genus $g \geq 5$ defined over number fields having a given solvable Galois group, assuming the following conjecture of Bombieri and, independently, Lang \cite{lang}.
\begin{conjecture}[Bombieri-Lang]
Let $V$ be a variety of general type, then rational points on $V$ are not Zariski dense.
\end{conjecture}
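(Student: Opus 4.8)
This statement is the Bombieri--Lang conjecture itself, which the paper adopts as a standing hypothesis rather than proves; it is one of the central open problems of Diophantine geometry, and I would not attempt to establish it here. What I can sketch is the shape any proof would have to take, and why the obstacle is fundamental. The goal would be to convert the purely birational hypothesis---that the canonical bundle $K_V$ is big---into arithmetic control over the rational points $V(K)$. The one regime where this is known unconditionally is when $V$ carries a nonconstant map to a semiabelian variety with large image: Faltings' theorem on subvarieties of abelian varieties (the ``big'' Faltings theorem), together with the curve case of genus $\geq 2$, yields non-density precisely there. So the natural first step would be to use the Albanese map and the structure theory of general-type varieties to reduce to the case of small Albanese dimension---which is exactly where every current technique breaks down.

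The most developed strategy beyond the abelian setting is Vojta's, which transports the Second Main Theorem of Nevanlinna theory into arithmetic via the analogy between entire holomorphic curves and height-bounded sequences of rational points. There one seeks, for a fixed ample divisor $A$ and every $\epsilon > 0$, a height inequality $h_{K_V}(P) \leq \epsilon\, h_A(P) + O(1)$ valid outside some proper closed subset; since $K_V$ is big, such an inequality would force the points of large height onto a proper subvariety and hence give non-density. Vojta established inequalities of exactly this form for subvarieties of semiabelian varieties, recovering and extending Faltings, and the programme would be to prove the analogue whenever $K_V$ is big. Related lines of attack---via the $abc$ conjecture, via Arakelov intersection theory, and via positivity of arithmetic or adelic line bundles---all target the same inequality.

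The hard part, and the reason the conjecture is still open, is that no known mechanism extracts information about rational points from the positivity of $K_V$ alone once $V$ is not dominated by an abelian variety: bigness of $K_V$ is a statement about pluricanonical sections, and no existing height machinery converts such sections into the required Diophantine inequality in general. Even the weaker prediction that the dense locus is confined to a fixed proper ``special'' subvariety (Lang's conjecture on the special set) is unproven. For these reasons the appropriate course in this paper is to assume Bombieri--Lang, as stated above, and to deduce non-density of solvable points from the general-type conclusion proved for the relevant parameter varieties, rather than to prove the conjecture itself.
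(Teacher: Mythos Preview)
Your assessment is correct and matches the paper exactly: the statement is labelled as a \emph{conjecture}, not a theorem, and the paper supplies no proof, instead invoking it as a hypothesis (explicitly in Theorem~2, ``Assume the Bombieri--Lang conjecture''). Your commentary on Vojta's programme and Faltings' results is accurate background but goes beyond what the paper itself offers; for the purposes of this paper there is nothing to prove here.
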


We first recall the definition of general type:
\begin{defn}
A smooth variety is of general type if its Kodaira dimension is equal to the dimension of the variety
\end{defn}
Unwinding the definition, gives the following.
\begin{defn}
Let $V$ be an $n$-dimensional projective variety, then $V$ is of general type if for large enough $r$,  the image of the map associated to $rK_V$ (where $K_V$ is the canonical divisor) is $n$-dimensional.
\end{defn}
The associated map in the above definition is the map (unique up to isomorphisms of $\mathbb{P}^k$) to $\mathbb{P}^k$ where each coordinate is an element of a basis of $\mathcal{L}(r K_V) = \{f \in k(V) | \mathrm{Div}(f) + r K_V \geq 0\}$. For example, a curve is of general type if and only if it has genus $g \geq 2$. In this case, the Bombieri-Lang conjecture is simply Faltings' theorem. 
In this paper, it will be necessary to study singular varieties and so a notion of general type for such varieties is needed. It is known that Kodaira dimension is a birational invariant, and so the definition of general type can be extended as follows.
\begin{defn}
A variety is of general type if a desingularisation is of general type.
\end{defn}

We return to the problem of points with a given Galois group. We will fix a number field, $K$, and assume the curve, $C$, is smooth, projective, and geometrically irreducible. 

Fix a transitive subgroup $G \leq S_n$, then $G$ acts on $C^n$ by permuting the factors. As $G$ is finite and its orbits are contained in affine patches, the quotient variety, $C^n / G$, exists. Points on the variety will be denoted as $[(P_1, ..., P_n)]$, where $(P_1, ..., P_n)$ is a representative of the equivalence class. Rational points on such varieties are closely related to points on the original curve with given Galois group. More precisely:
\begin{propn}
A rational point on $C^n / G$ is a union of Galois orbits of points on $C$ defined over fields with Galois group contained in $G$. 
\end{propn}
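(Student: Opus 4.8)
The plan is to exploit that the quotient map $\pi \colon C^n \to C^n/G$ is a finite morphism defined over $K$ whose fibres over $\overline{K}$-points are precisely the $G$-orbits, and to push the action of $\Gamma := \mathrm{Gal}(\overline{K}/K)$ through it. So first fix a rational point $x = [(P_1,\dots,P_n)] \in (C^n/G)(K)$ together with a representative $(P_1,\dots,P_n) \in C^n(\overline{K})$.

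I would then observe that for every $\sigma \in \Gamma$ the tuple $(\sigma P_1, \dots, \sigma P_n)$ lies in the $G$-orbit of $(P_1, \dots, P_n)$: applying $\sigma$ to the identity $\pi(P_1,\dots,P_n) = x$ and using that both $\pi$ and $x$ are defined over $K$ gives $\pi(\sigma P_1,\dots,\sigma P_n) = \sigma x = x$, so the two tuples lie in the same fibre of $\pi$, hence in the same $G$-orbit. Thus there is $\tau_\sigma \in G$ with $\sigma P_i = P_{\tau_\sigma(i)}$ for all $i$ (for the appropriate convention for the action of $G$ on coordinates). In particular the finite set $\Sigma := \{P_1,\dots,P_n\} \subseteq C(\overline{K})$ is stable under $\Gamma$, so it decomposes as a disjoint union of Galois orbits $O_1 \sqcup \dots \sqcup O_k$; each $O_j$ is the set of $\overline{K}$-points above a single closed point of $C$, and for $P \in O_j$ we have $|O_j| = [K(P):K]$. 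This already yields the first half of the statement, that $x$ corresponds to the union of the points of $C$ attached to $O_1,\dots,O_k$.

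It then remains to control the Galois groups. Let $L = K(P_1,\dots,P_n)$ be the subfield of $\overline{K}$ generated by the coordinates of all the $P_i$; it is the compositum of the residue fields of the closed points attached to the $O_j$. I claim $\mathrm{Gal}(L/K)$ embeds as a subgroup of $G$ when the $P_i$ are pairwise distinct, and as a subquotient of $G$ in general. When the $P_i$ are distinct, $\tau_\sigma$ is uniquely determined, and from $\sigma\rho P_i = \sigma P_{\tau_\rho(i)} = P_{\tau_\sigma\tau_\rho(i)}$ one reads off that $\sigma \mapsto \tau_\sigma$ is a homomorphism $\Gamma \to G$; its kernel is exactly $\{\sigma \in \Gamma : \sigma P_i = P_i \text{ for all } i\} = \mathrm{Gal}(\overline{K}/L)$, whence $\mathrm{Gal}(L/K) \hookrightarrow G$. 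For the general case I would pass to the partition of $\{1,\dots,n\}$ into blocks $\{i : P_i = P_j\}$: the stabiliser $H = \mathrm{Stab}_G(P_1,\dots,P_n)$, which measures the ambiguity in $\tau_\sigma$, fixes every $P_i$ and so acts trivially on the set of blocks, while every $\tau_\sigma$ permutes the blocks; hence $\sigma \mapsto (\text{induced block permutation})$ is a well-defined homomorphism $\Gamma \to S_m$ with image a subquotient of $G$ and kernel again $\mathrm{Gal}(\overline{K}/L)$.

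The one genuinely delicate point is this last case, where $(P_1,\dots,P_n)$ has nontrivial $G$-stabiliser (equivalently, has repeated coordinates): there the honest conclusion is that $\mathrm{Gal}(L/K)$ is a subquotient of $G$ rather than literally a subgroup, so some care is needed about what ``Galois group contained in $G$'' should mean. This is harmless downstream, since the relevant $G$ will be solvable and solvability passes to subquotients (and such degenerate representatives are in any case highly constrained). Everything else is a formal consequence of the defining property of $\pi$ together with standard Galois theory, so I expect no serious difficulty beyond this bookkeeping.
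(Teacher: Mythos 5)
Your proposal is correct and follows essentially the same route as the paper: apply $\sigma \in \mathrm{Gal}(\overline{K}/K)$ to a representative tuple, use that the fibre of $C^n \to C^n/G$ over the rational point is a single $G$-orbit to get $\tau_\sigma \in G$ with $\sigma P_i = P_{\tau_\sigma(i)}$, and read off the Galois orbits and the map to $G$. The one place you go beyond the paper is in flagging that when the representative has repeated coordinates (nontrivial $G$-stabiliser) the assignment $\sigma \mapsto \tau_\sigma$ is only well defined up to that stabiliser, so the honest conclusion is a subquotient of $G$; the paper's proof silently assumes distinct $P_i$, and your bookkeeping is the correct way to patch that (and harmless for the solvable applications downstream).
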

\begin{proof}
Let $[(P_1, ..., P_n)]$ be a rational point on $C^n / G$, then for every $\sigma \in \mathrm{Gal}(\overline{K} / K)$, $\sigma [(P_1, ..., P_n)] = [(P_1, ..., P_n)]$. The Galois action is coordinate-wise, and so $[(P_1, ..., P_n)] = [(\sigma P_1, ..., \sigma P_n)]$. Therefore $\sigma P_i$ is $P_j$ for some $j$. Let $P_{i_1}, ..., P_{i_m}$ be a Galois orbit, then the homomorphism $$\mathrm{Gal}(K(P_{i_1}, ..., P_{i_m}) / K) \to S_n$$ must have image contained in $G$. Repeating this for each Galois orbit gives the desired statement.
\end{proof}

This motivates the study of these varieties: to understand points on the curve with given Galois group. The rest of the paper is dedicated to the structure of these varieties. To this end, we determine when these varieties are of general type.
\begin{theorem}
Suppose $G$ contains exactly $m$ transpositions of the form $(1, i)$. Then $C^n / G$ is of general type if and only if $g(C) > m + 1$.
\label{mainthm}
\end{theorem}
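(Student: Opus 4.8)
The plan is to quotient out the subgroup of $G$ generated by quasi-reflections, recognise the result as a power of a symmetric power of $C$, and then settle the question for $\operatorname{Sym}^{d}C$ using the Abel--Jacobi map. First I would determine the quasi-reflections of the action of $G$ on $C^{n}$ --- the elements fixing a divisor pointwise --- since these govern the canonical class of the quotient. For $g\in G$ the fixed locus $(C^{n})^{g}$ is a product of copies of $C$ indexed by the cycles of $g$, hence has codimension $n-c(g)$, where $c(g)$ denotes the number of cycles; this equals $1$ exactly when $g$ is a transposition. So the quasi-reflections are precisely the transpositions lying in $G$. Using that $G$ is transitive together with the relation $(ij)(jk)(ij)=(ik)$, the graph on $\{1,\dots,n\}$ whose edges are the transpositions of $G$ is a disjoint union of complete graphs of equal size, and since vertex $1$ has degree $m$ these are copies of $K_{m+1}$; in particular $n=s(m+1)$ for some $s$. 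Hence the normal subgroup $N\trianglelefteq G$ generated by the transpositions of $G$ equals $\prod_{j=1}^{s}\operatorname{Sym}(B_{j})\cong(S_{m+1})^{s}$ over the $s$ blocks $B_{j}$, and $G/N$ embeds into $S_{s}$, acting by permuting the blocks.

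Since $N$ acts block-wise, $C^{n}/N\cong(\operatorname{Sym}^{m+1}C)^{s}=:Y^{s}$, which is smooth, and $C^{n}/G\cong Y^{s}/(G/N)$ with $G/N\le S_{s}$ acting by permuting the $s$ factors. This action has no quasi-reflections --- a factor-transposition fixes a locus of codimension $\dim Y=m+1\ge 2$ when $m\ge 1$, and when $m=0$ the combinatorics above force $G$ to contain no transposition at all --- and a Reid--Tai computation shows the quotient singularities are moreover \emph{canonical}: for $\bar g\neq 1$ the action on the tangent space at a fixed point of $Y^{s}$ is the permutation representation of $\bar g\in S_{s}$ tensored with $\mathbb{C}^{m+1}$, so $\operatorname{age}(\bar g)=(m+1)\cdot\tfrac{s-c(\bar g)}{2}\ge\tfrac{m+1}{2}\ge 1$ (for $m=0$ this reads $\tfrac{n-c(\bar g)}{2}\ge 1$, since $\bar g$ is not a transposition). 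It follows that $\kappa(\widetilde{C^{n}/G})=\kappa(Y^{s})$: the inequality $\le$ holds because a smooth birational model of $Y^{s}$ admits a generically finite dominant morphism onto a resolution of $C^{n}/G$, and $\ge$ holds because, the quotient map being \'etale in codimension $1$ and the singularities canonical, $H^{0}(\widetilde{C^{n}/G},rK)$ is identified with $H^{0}(Y^{s},rK_{Y^{s}})^{G/N}$, whose dimension grows like $r^{\dim Y^{s}}$ once $Y^{s}$ is of general type (the canonical ring of $Y^{s}$ being integral over its invariant subring). As $Y^{s}$ is of general type if and only if $Y=\operatorname{Sym}^{m+1}C$ is (K\"unneth for the canonical bundle of a product), the theorem reduces to showing that $\operatorname{Sym}^{m+1}C$ is of general type exactly when $g>m+1$.

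To finish, I would prove the latter. For $d\ge g$ the Abel--Jacobi map $\operatorname{Sym}^{d}C\to\operatorname{Jac}(C)$ is surjective with generic fibre $\mathbb{P}^{d-g}$, so $\operatorname{Sym}^{d}C$ is uniruled for $d>g$ and birational to an abelian variety for $d=g$; in either case it is not of general type. For $d\le g-1$ the Abel--Jacobi map is birational onto the subvariety $W_{d}\subseteq\operatorname{Jac}(C)$, and $\operatorname{Stab}(W_{d})$ is trivial: writing $W_{g-1}=W_{d}+W_{g-1-d}$ as a sum of subsets of $\operatorname{Jac}(C)$, any translation preserving $W_{d}$ also preserves the theta divisor $W_{g-1}$, whose stabiliser vanishes because the polarisation of $\operatorname{Jac}(C)$ is principal (so the associated map $\operatorname{Jac}(C)\to\widehat{\operatorname{Jac}(C)}$ is an isomorphism). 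By Ueno's theorem a subvariety of an abelian variety with finite stabiliser is of general type, so $W_{d}$, and hence $\operatorname{Sym}^{d}C$, is of general type. Taking $d=m+1$ and combining with the previous paragraph completes the proof.

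The step I expect to require the most care is controlling the Kodaira dimension across the singular quotient $Y^{s}/(G/N)$: it is precisely the Reid--Tai age bound $\operatorname{age}(\bar g)\ge\tfrac{m+1}{2}\ge 1$ --- rather than the mere absence of quasi-reflections --- that ensures a resolution of $C^{n}/G$ retains the maximal Kodaira dimension inherited from $Y^{s}$.
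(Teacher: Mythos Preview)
Your argument is correct and follows the same overall architecture as the paper: identify the normal subgroup $N$ generated by transpositions as $(S_{m+1})^{s}$ (the paper's Lemmas~2 and~3), factor $C^{n}/G$ as $(\operatorname{Sym}^{m+1}C)^{s}/(G/N)$, and reduce everything to the classical fact (via Abel--Jacobi and Ueno) that $\operatorname{Sym}^{d}C$ is of general type precisely when $d<g$.

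The substantive difference is in how the passage from $Y^{s}$ to $Y^{s}/(G/N)$ is controlled. The paper treats $m=0$ separately by a direct Riemann--Hurwitz comparison of $K_{C^{n}}$ and $K_{X}$ through $\operatorname{Sym}^{n}C$, and for $m\ge 1$ it pushes one step further down to $\operatorname{Sym}^{s}(\operatorname{Sym}^{m+1}C)$ and invokes the Arapura--Archava theorem that symmetric powers of a variety of dimension $>1$ inherit general type; pluricanonical sections are then pulled back along the finite map $Y^{s}/(G/N)\to\operatorname{Sym}^{s}Y$. Your route replaces both of these by a single Reid--Tai computation: since $G/N\le S_{s}$ acts on $Y^{s}$ with $\operatorname{age}(\bar g)=(m+1)(s-c(\bar g))/2\ge 1$ for every $\bar g\neq 1$ (using, when $m=0$, that $G$ then contains no transpositions at all), the quotient has canonical singularities and $H^{0}(\widetilde{C^{n}/G},rK)\cong H^{0}(Y^{s},rK_{Y^{s}})^{G/N}$, whose growth matches that of the full canonical ring by integrality. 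This is more self-contained (no external citation for symmetric powers of higher-dimensional varieties) and uniform across all $m$; the paper's version, on the other hand, avoids the Reid--Tai machinery entirely and keeps everything at the level of explicit divisor comparisons and a quotable result.
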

This is proved in Section 2.

\begin{corollary}
If $G$ is a solvable group and $g(C) \geq 5$, then $C^n / G$ is of general type.
\end{corollary}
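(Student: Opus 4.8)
The plan is to deduce the corollary directly from Theorem~\ref{mainthm}. Since $g(C) \geq 5$, it suffices to show that a solvable transitive subgroup $G \leq S_n$ contains at most $3$ transpositions of the form $(1,i)$: if so, then writing $m$ for that number we have $g(C) \geq 5 > m+1$, and Theorem~\ref{mainthm} immediately gives that $C^n/G$ is of general type.

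The key observation is a closure property of the set of such transpositions. Let $S = \{1\} \cup \{\, i : (1,i) \in G \,\}$, so that $m = |S| - 1$. If $(1,i)$ and $(1,j)$ both lie in $G$ with $i \neq j$, then the conjugate $(1,i)(1,j)(1,i) = (i,j)$ also lies in $G$. Consequently the subgroup $H \leq G$ generated by $\{\, (1,i) : i \in S,\ i \neq 1 \,\}$ contains every transposition supported on $S$ and fixes every point outside $S$; since the transpositions $(1,2), \dots, (1,k)$ generate the full symmetric group on $k$ letters, we conclude $H \cong \operatorname{Sym}(S) \cong S_{m+1}$.

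Now a subgroup of a solvable group is solvable, so $S_{m+1}$ is solvable. Since $S_k$ is solvable if and only if $k \leq 4$, it follows that $m + 1 \leq 4$, i.e.\ $m \leq 3$. Hence $g(C) \geq 5 > m + 1$, and Theorem~\ref{mainthm} applies. I do not expect any serious obstacle here: the argument is elementary group theory, and the only points needing (minor) care are the conjugation identity that upgrades the relevant transpositions to a full symmetric group, and the standard fact that $S_k$ fails to be solvable precisely for $k \geq 5$ (because $A_5 \trianglelefteq S_5$ is simple and nonabelian).
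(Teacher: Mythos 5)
Your proposal is correct and follows essentially the same route as the paper: identify the subgroup generated by the transpositions $(1,i)$ as a copy of $S_{m+1}$ inside $G$, use solvability of $G$ to force $m+1 \leq 4$, and then invoke Theorem~\ref{mainthm}. You merely supply the conjugation argument showing this subgroup really is $S_{m+1}$, which the paper leaves implicit (it also appears in its later lemma on equivalence classes of transpositions).
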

\begin{proof}
Suppose $G$ contains exactly $m$ transpositions of the form $(1, i)$. Then $G$ contains a subgroup isomorphic to $S_{m + 1}$. As $G$ is solvable, $m + 1 \leq 4$. Then $g(C) > m + 1$, and so by Theorem 1, $C^n / G$ is of general type.
\end{proof}
Combining the corollary with the Bombieri-Lang conjecture, we get the following theorem.

\begin{theorem}
Assume the Bombieri-Lang conjecture. Let $G$ be a solvable group, and $C$ be a smooth curve of genus at least 5. Then for any transitive embedding $G \hookrightarrow S_n$, the rational points on $C^n / G$ are not Zariski dense. 
\end{theorem}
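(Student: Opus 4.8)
The plan is to deduce the statement directly from the Corollary together with the Bombieri--Lang conjecture; the only genuine work is to pass between the (possibly singular) quotient $V := C^n/G$ and a smooth model of it. First I would invoke the Corollary: since $G$ is solvable and $g(C) \geq 5$, the variety $V$ is of general type. By the last definition in the introduction this means precisely that some desingularisation $\pi \colon \widetilde{V} \to V$ --- which, by Hironaka, may be taken to be a projective birational morphism with $\widetilde{V}$ smooth --- is of general type in the classical sense. Note that $V$ is irreducible, being a quotient of the irreducible variety $C^n$, hence so is $\widetilde{V}$, and $\dim \widetilde{V} = \dim V = n$.

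Next I would apply the Bombieri--Lang conjecture to $\widetilde{V}$: the set $\widetilde{V}(K)$ is not Zariski dense, so its closure is contained in a closed subset $Z \subsetneq \widetilde{V}$ with $\dim Z < n$. Then comes the bookkeeping. The morphism $\pi$, being defined over $K$, is an isomorphism over a dense open subset $U \subseteq V$, so it induces a bijection $\pi^{-1}(U)(K) \xrightarrow{\ \sim\ } U(K)$; consequently $U(K) \subseteq \pi(\widetilde{V}(K)) \subseteq \pi(Z)$. Since a morphism does not increase the dimension of the image, $\overline{\pi(Z)}$ is a closed subset of $V$ of dimension at most $\dim Z < n$, hence a proper closed subset, while $V \setminus U$ is also a proper closed subset of $V$. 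Therefore
\[
V(K) = U(K) \cup (V \setminus U)(K) \subseteq \overline{\pi(Z)} \cup (V \setminus U),
\]
a union of two proper closed subsets of the irreducible variety $V$, which is again a proper closed subset. Hence $V(K)$ is not Zariski dense, as claimed.

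I expect essentially no obstacle beyond this: the substance is entirely in Theorem \ref{mainthm} and its Corollary. The one point demanding care is the one just handled --- Bombieri--Lang is phrased for (smooth) varieties of general type, whereas $C^n/G$ is typically singular, so one must transport the conclusion along the desingularisation and absorb both the exceptional locus and the complement $V \setminus U$ into a proper closed subset. Alternatively, one could quote a form of the Bombieri--Lang conjecture stated directly for singular varieties of general type, in which case the proof reduces to ``Corollary plus conjecture''.
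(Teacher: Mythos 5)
Your proof is correct and follows exactly the route the paper intends: the paper gives no argument beyond ``combine the Corollary with the Bombieri--Lang conjecture,'' and your write-up is the natural fleshing-out of that, with the desingularisation bookkeeping (transporting non-density of rational points from $\widetilde{V}$ back to the possibly singular $C^n/G$) handled correctly.
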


For curves of genus 2, 3 and 4, we can easily compute the Kodaira dimension of $C^n / G$, where $G$ is the Galois group of points defined by P\'al's method. 

For $g(C) = 2$, $C$ is hyperelliptic, and so pulling rational points of $\mathbb{P}^1$ back through the double covering generically gives points defined over number fields with Galois group $S_2$. The variety $C^2 / S_2$ is birational to the Jacobian, and has Kodaira dimension 0.

When $g(C) = 3$, $C$ is generically a plane quartic. Intersecting the quartic with lines in $\mathbb{P}^2$ gives points with Galois groups $S_4$ generically. The quotient $C^4 / S_4$ is uniruled over the Jacobian, and so has Kodaira dimension $-\infty$. 

For the $g(C) = 4$ case, $C$ is generically the intersection of a quadric surface and a cubic surface. P\'al starts by taking a point on the quadric (generically defined over a quadratic extension), and finding a ruling through the point (defined over a further quadratic extension). Intersecting this line with the cubic surface gives a point of the curve defined generically over a further $S_3$ extension. The Galois group of this point is then generically the subgroup of $S_{12}$ generated by $(1, 2)$, $(1, 3)$, $(1, 4, 7, 10)$ and $(1, 4)(7, 10)$. There are exactly 2 transpositions in the group that act non-trivially on 1, and $4 > 3$, so $C^{12} / G$ is of general type --- its Kodaira dimension is 12. 

The latter example shows it is not enough for $C^n / G$ to be of general type. In the $g(C) = 4$ case, the solvable points arise from a low degree morphism from the curve to $\mathbb{P}^1$. For higher genus curves, there will not be low degree morphisms, and so we do not expect there to be many solvable points. We prove that if there are rational points distributed, in a precise sense, like those arising from a morphism, then those points do indeed arise from a morphism. 

\begin{defn}
Fix a curve $C$. A rational curve $D$ in $\mathrm{Sym }^n C$ is of fibre type if $D$ generically intersects the image of $\{P\} \times C^{n - 1}$ in a single point (as $P$ varies over $C$). A curve in $C^n / G$ is of fibre type if it maps injectively onto its image in $\mathrm{Sym }^n C$ and its image is of fibre type.
\end{defn}

We prove the following theorem about curves of fibre type in Section 3.
\begin{theorem}
Let $G$ be a transitive subgroup of $S_n$, and suppose there exists a curve of fibre type in $C^n / G$ with Zariski dense rational points, then $C$ has a morphism to $\mathbb{P}^1$ with Galois group $G$. 
\label{ratcurve}
\end{theorem}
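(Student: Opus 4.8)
The plan is to reconstruct a degree-$n$ morphism $C\to\mathbb{P}^1$ directly from the fibre-type curve, and then to read off its Galois group by lifting everything to $C^n$.

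\emph{Reduction to $\mathbb{P}^1$ and two covers of it.} Let $D\subseteq C^n/G$ be the given curve of fibre type, $\bar D\subseteq\operatorname{Sym}^nC$ its image, and $B$ the normalisation of $D$. Since $D\to\bar D$ is birational onto a rational curve and $D$ has a smooth rational point (it has Zariski-dense ones), $B\cong\mathbb{P}^1_K$, and the maps $D\hookrightarrow C^n/G$ and $D\to\operatorname{Sym}^nC$ pull back to morphisms $B\to C^n/G$ and $B\to\operatorname{Sym}^nC$, the latter with image $\bar D$. Pulling back the universal degree-$n$ divisor on $C$ along $B\to\operatorname{Sym}^nC$ gives a finite flat degree-$n$ morphism $\mathcal D\to B$ together with $p\colon\mathcal D\to C$; concretely $\mathcal D$ is the incidence curve $\{(s,Q):Q\in\operatorname{supp}E_s\}$ of the family of divisors $\{E_s\}_{s\in B}$. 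Separately, pull back the $G$-cover $C^n\to C^n/G$ (étale off the diagonals) along $B\to C^n/G$ and normalise, obtaining a $G$-cover $\rho\colon\hat B\to B$ carrying the $n$ coordinate projections $\pi_i\colon\hat B\to C$, on which $G$ acts by permuting the $\pi_i$ via its action on $\{1,\dots,n\}$.

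\emph{Connectedness of $\hat B$ — the crux.} The main obstacle is to show $\hat B$ is connected, i.e. that the monodromy of the family $\{E_s\}$ is all of $G$ and not a proper transitive subgroup $M\subsetneq G$; this is the point at which one must use that $D$ genuinely lives in $C^n/G$, for if the monodromy were such an $M$ then $D$ would lift to a fibre-type curve in $C^n/M$ and the construction would only yield a morphism with group $M$. I expect connectedness to be forced by the hypotheses (density of the rational points together with the injectivity built into ``fibre type''), but checking it is the substantive step. Granting it, two things follow. First, $\rho$ is a connected $G$-torsor, hence Galois with group $G$. Second, the family $\{E_s\}$ has no base point: a point $P_0$ lying in every $E_s$ would force $\hat B\subseteq\bigcup_i\{\pi_i=P_0\}$, hence $\pi_{i_0}\equiv P_0$ for some $i_0$ since $\hat B$ is irreducible, and then transitivity of $G$ makes \emph{every} $\pi_i$ constant equal to $P_0$, so $\hat B$ is a point — impossible. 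With no base points $\mathcal D$ is irreducible, and the fibre-type hypothesis says exactly that the generic fibre of $p$ is a single point (the fibre over generic $P$ being $\bar D\cap\Theta_P$, where $\Theta_P$ is the image of $\{P\}\times C^{n-1}$), so $p\colon\mathcal D\to C$ is birational; inverting it and composing with $\mathcal D\to B\cong\mathbb{P}^1$ produces a degree-$n$ morphism $f\colon C\to\mathbb{P}^1$.

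\emph{Identifying the Galois group.} Put $H=\operatorname{Stab}_G(1)\le G$. Since $g\in G$ fixes the first coordinate iff $g\in H$, the morphism $\pi_1$ is $H$-invariant and factors as $\hat B\to\hat B/H\xrightarrow{\bar\pi_1}C$, with $\hat B/H\to B$ of degree $[G:H]=n$. Working over a generic reduced fibre $E_s$: the fibre of $\rho$ is a $G$-torsor, $\pi_1$ maps it onto $\operatorname{supp}E_s$ (using transitivity of $G$), and one obtains a canonical isomorphism $\hat B/H\cong\mathcal D$ over $B$ carrying $\bar\pi_1$ to $p$. Hence $\bar\pi_1$ is birational, $C$ is the normalisation of $\hat B/H$, and $f$ is, under this identification, the cover $\hat B/H\to B$. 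Because $G\hookrightarrow S_n$ is faithful, equivalently $\bigcap_{g\in G}gHg^{-1}=\{1\}$, the Galois closure of $f$ is exactly $\hat B\to B$, a $G$-Galois cover whose deck group acts on the $n$ sheets as $G$ acts on $G/H$, i.e. via the given transitive embedding $G\hookrightarrow S_n$. Thus $f\colon C\to\mathbb{P}^1$ is a morphism with Galois group $G$, as required.
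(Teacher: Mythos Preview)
You explicitly flag the gap yourself: connectedness of $\hat B$ is, in your words, ``the substantive step'', and you proceed by granting it. This is a genuine omission --- over the generic point of $B$, connectedness of $\hat B$ amounts exactly to the statement that the Galois group of $f$ is all of $G$ rather than a proper subgroup, so assuming it is essentially assuming the conclusion. Note too that your no-base-points argument, and hence the irreducibility of $\mathcal D$, both rest on this assumption. Crucially, beyond identifying $B\cong\mathbb{P}^1_K$, nothing in your argument actually exploits the Zariski density of the rational points on $D$, and that is precisely where the missing input should enter.

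The paper supplies that input via Hilbert irreducibility. Having built $f$ from the image $\bar D\subseteq\mathrm{Sym}^n C$, it argues pointwise: for each $t_0\in\mathbb{P}^1(K)$ the fibre $f^{-1}(t_0)$ lands in $\bar D(K)$ and, by the injectivity clause in the definition of fibre type, lifts uniquely --- hence $K$-rationally --- to $C^n/G$; thus every specialisation of $f$ at a $K$-point has Galois group contained in $G$, and the resolvent cover for $G$ acquires a $K$-point over every $t_0\in\mathbb{P}^1(K)$. Since $\mathbb{P}^1(K)$ is not a thin set, that cover is forced to be trivial, and the generic Galois group lies in $G$. (The paper's proof in fact stops at $\mathrm{Gal}(f)\le G$, so your concern that a proper transitive subgroup $M\subsetneq G$ might occur is well placed and is not resolved there either.)
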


Combining this with the geometry of generic high genus curves gives the following, which gives further control on the geometry of the varieties $C^n / G$. 
\begin{corollary}
Let $C$ be a very general curve of genus $\geq 7$, and $G$ a solvable, transitive subgroup of $S_n$ for some $n$. Then $C^n / G$ is a variety of general type, and contains no curves of fibre type. 
\end{corollary}
\begin{proof}
The statement about being of general type follows from the previous corollary.

A theorem of Zariski \cite{zariski} states that for a very general curve of genus $\geq 7$, the Galois group of any morphism $C \to \mathbb{P}^1$ is not solvable. By Theorem 3, the existence of a curve of fibre type (defined over an extension $L / K$) with dense $L$-rational points, would imply the existence of a morphism from $C$ with Galois group $G$, contradicting Zariski's theorem. Thus, there are no curves of fibre type with dense $L$-rational points, for any field extension $L / K$. As any curve of fibre type is rational, there exists a field extension where its rational points are dense, thus there can be no such curves.
\end{proof}

It is unclear if there exist curves defined over $K$ such that the hypothesis is satisfied, as very general entails the removal of countably many subvarieties of the moduli space, and the algebraic closure of $K$ is also countable. 
The fibre type condition is quite restrictive, but for $n$ near the gonality of $C$, the dimensions of the fibres of $\mathrm{Sym}^n C$ over $\mathrm{Jac}(C)$ are small enough that this behaviour is typical in the symmetric power.

\section{When Quotients are of General Type}
\subsection{The no transpositions case}
First, we prove the following special case,
\begin{theorem}
Let $C$ be a smooth curve of genus at least 2, and let $G \subset S_n$ be a subgroup that contains no transpositions. Then $C^n / G$ is of general type.
\label{notrans}
\end{theorem}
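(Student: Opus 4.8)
The plan is to use the no-transposition hypothesis to make the quotient map $\pi\colon C^n\to Y:=C^n/G$ only mildly ramified, and then transport the (evident) general type of the canonically polarised variety $C^n$ down to $Y$ through $\pi$. Concretely I would carry out three steps: (1) show $\pi$ is \'etale in codimension one; (2) deduce, via the Reid--Tai age criterion, that $Y$ has canonical singularities; (3) compare pluricanonical sections on a resolution of $Y$ with $G$-invariant pluricanonical sections on $C^n$. (Note this is the $m=0$ case of the general statement, Theorem \ref{mainthm}, and it is worth isolating because it is where the argument is cleanest.)

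\emph{Step 1 (quasi-\'etaleness).} For $g\in G\setminus\{e\}$ the fixed locus in $C^n$ is cut out by the equations $P_i=P_{g(i)}$, hence has codimension $\sum_j(\ell_j-1)$, where $\ell_1,\dots,\ell_k$ are the lengths of the nontrivial cycles of $g$; this equals $1$ exactly when $g$ is a single transposition. Since $G$ contains no transpositions, $\operatorname{Fix}(g)$ has codimension $\ge 2$ for every $g\neq e$, so $\pi$ is unramified in codimension one. In particular the ramification formula reads $K_{C^n}=\pi^{*}K_Y$, and $Y$ has only ($\mathbb{Q}$-Gorenstein) quotient singularities.

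\emph{Step 2 (canonical singularities).} After linearising the stabiliser action, the formal/\'etale-local model of $Y$ at a point $p$ with stabiliser $H\le G$ is $(T_pC^n)/H$, and $H$ contains no quasi-reflections (such an element would again be a transposition), so I would invoke the Reid--Tai criterion: $Y$ is canonical iff $\operatorname{age}(h)\ge 1$ for every $h\in H\setminus\{e\}$. Writing $p=(P_1,\dots,P_n)$ and grouping the indices into blocks on which the $P_i$ coincide, $h$ preserves each block and acts on $T_pC^n=\bigoplus_i T_{P_i}C$ by permuting, inside each block, the copies of a single tangent line via a permutation matrix; an $\ell$-cycle of such a matrix has age $(\ell-1)/2$, so summing over cycles gives $\operatorname{age}(h)=\tfrac{1}{2}\bigl(n-c(h)\bigr)$, where $c(h)$ is the number of cycles of $h$ in $S_n$. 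As $h$ is never a transposition, $n-c(h)\ge 2$, hence $\operatorname{age}(h)\ge 1$, so $Y$ is canonical. I expect this step to be the main obstacle: one must check that \emph{every} nontrivial element of \emph{every} (possibly non-cyclic) stabiliser clears the age bound, and it is the sole place the hypothesis is used — a transposition $(i\,j)\in G$ at a point with $P_i=P_j$ contributes age $\tfrac{1}{2}$, and would break canonicity.

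\emph{Step 3 (pluricanonical growth).} Let $\rho\colon\widetilde Y\to Y$ be a resolution. Canonicity gives $H^0(\widetilde Y,mK_{\widetilde Y})=H^0(Y,mK_Y)$ for all $m\ge 0$; moreover $H^0(Y,mK_Y)=H^0(Y_{\mathrm{reg}},mK_{Y_{\mathrm{reg}}})=H^0(C^n,mK_{C^n})^{G}$, the first equality because $Y_{\mathrm{sing}}$ has codimension $\ge 2$, the second by quasi-\'etaleness together with Hartogs extension. Since $g\ge 2$ makes $K_C$ ample, $C^n$ is canonically polarised, so for $m\gg 0$, $h^0(C^n,mK_{C^n})$ agrees with a polynomial of degree $n$ in $m$ with positive leading term $(2g-2)^n$. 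Taking $G$-invariants, $\bigoplus_m H^0(C^n,mK_{C^n})^{G}$ is the invariant subring $R^{G}$ of the canonical ring $R$ of $C^n$; as $R$ is a finitely generated graded domain of Krull dimension $n+1$ and is module-finite over the (finitely generated, Noether) graded domain $R^{G}$, the latter also has Krull dimension $n+1$, whence $h^0(\widetilde Y,mK_{\widetilde Y})=\dim_k(R^{G})_m$ grows like a degree-$n$ polynomial with positive leading coefficient. (Equivalently, the holomorphic Lefschetz formula shows the contributions of $g\neq e$ to $\dim(R_m)^{G}$ are $O(m^{n-1})$ since $\operatorname{Fix}(g)$ has dimension $<n$, leaving leading term $(2g-2)^n m^n/|G|$.) Therefore $\kappa(\widetilde Y)=n=\dim Y$, i.e. $C^n/G$ is of general type.
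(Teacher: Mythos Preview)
Your proof is correct and takes a genuinely different route from the paper's. The paper never invokes the Reid--Tai criterion or the notion of canonical singularities; instead it factors everything through $\mathrm{Sym}^n C$, comparing the two maps $\pi_1\colon C^n\to\mathrm{Sym}^n C$ and $\phi\colon X\to\mathrm{Sym}^n C$ (with $X$ a resolution of $C^n/G$). It applies a Riemann--Hurwitz--type formula $K_\bullet\simeq f^*K_{\mathrm{Sym}^n C}+R+E$ to each, uses the no-transposition hypothesis only to see that $\pi_2$ is still ramified along the image of the big diagonal away from a codimension-$2$ locus, and combines the two formulae to obtain $n!\,K_X\ge\phi^*{\pi_1}_*K_{C^n}$; it then exhibits $n$ algebraically independent $G$-invariant sections by hand, as symmetric functions in the pullbacks $f_i$ of a single section of $K_C$. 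Your approach is more structural: quasi-\'etaleness plus the age computation give canonical singularities and hence identify $H^0(\widetilde Y,mK_{\widetilde Y})$ directly with $H^0(C^n,mK_{C^n})^G$, after which the Noether/Krull-dimension (or Lefschetz) argument yields the required growth without constructing any sections explicitly. What your route buys is a clean identification of the full pluricanonical ring of the resolution with an invariant ring, and even the precise leading coefficient $(2g-2)^n/|G|$, at the cost of importing Reid--Tai; the paper's route stays closer to first principles and avoids singularity theory entirely, but must thread the auxiliary target $\mathrm{Sym}^n C$ through the whole argument and only produces enough sections to witness maximal Kodaira dimension rather than the whole ring.
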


We first observe that as $G$ is a subgroup of $S_n$, there is a map $\pi_2 : C^n / G \to \mathrm{Sym}^n C$ so that $\pi_1 : C^n \to \mathrm{Sym}^n C$ factors through $C^n \to C^n / G$. The variety $C^n / G$ may be singular, and so it needs to be replaced by a desingularisation, $X$, which has a birational morphism $X \to C^n / G$. Composing this map with $\pi_2$ gives a map $\phi : X \to \mathrm{Sym}^n C$. Although $\pi_2$ is finite, $\phi$ may not be, as the map $X \to C^n / G$ may contract the pre-image of the singular locus, but it is generically finite. This is summarised in the following commutative diagram.

$$\begin{tikzcd}
    C^n \arrow[dd, "\pi_1"] \arrow[dr] & &\\
    & C^n / G \arrow[dl, "\pi_2"] & X \arrow[l] \arrow[dll, "\phi", bend left] \\
    \mathrm{Sym}^n C & &
  \end{tikzcd}$$

The variety $C^n / G$ can only be singular where a point of $C^n$ is fixed by some non-trivial $g \in G$, as elsewhere the quotient map $C^n \to C^n / G$ is finite \'{e}tale. As $G$ contains no transpositions, if $(P_1, ..., P_n)$ is fixed by a non-trivial element of $G$, either at least two pairs of $P_i$ are equal, or at least three $P_i$ are equal. Therefore the singular locus is contained in a subvariety of codimension at least $2$.

To prove the theorem, some results are needed about the structure of the maps $\pi_i$.

\begin{lemma}
The ramification locus of $\pi_1$ is the set $\Delta = \{(P_1, ..., P_n) | P_i = P_j \text{ for some } i \neq j\}$, and under the hypotheses of the theorem, the ramification locus of $\pi_2$, away from the singular points of $C^n / G$, is the image of $\Delta$ under the quotient map. The ramification index of both maps along the respective divisors is 2.
\end{lemma}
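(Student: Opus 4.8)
The plan is to treat the two assertions in turn, using throughout that we are in characteristic zero, so that quotients by finite groups are well behaved and all ramification is tame.

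For the ramification locus of $\pi_1$: since $\mathrm{Sym}^n C = C^n/S_n$ and $\pi_1$ is the quotient map, $\pi_1$ is \'etale at a closed point $x$ if and only if $\mathrm{Stab}_{S_n}(x)$ is trivial. Indeed, if the stabiliser $H$ were non-trivial then $\mathcal{O}_{C^n,x}$ would be an extension of degree $|H|>1$ of $(\mathcal{O}_{C^n,x})^H = \mathcal{O}_{\mathrm{Sym}^n C,\pi_1(x)}$ with trivial residue extension, which cannot be \'etale. The stabiliser of $(P_1,\dots,P_n)$ is the Young subgroup $\prod_{P} S_{\{\,i \,:\, P_i = P\,\}}$, which is trivial exactly when the $P_i$ are pairwise distinct, i.e. exactly off $\Delta$. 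Hence the ramification locus of $\pi_1$ is $\Delta$. (For the ramification divisor one notes that a generic point of a component of $\Delta$ has stabiliser generated by a single transposition, so $\pi_1$ is doubly ramified there and $\Delta$ is the reduced ramification divisor, but only the underlying set is needed here.)

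For $\pi_2$, write $q\colon C^n\to C^n/G$ for the quotient map, so that $\pi_1=\pi_2\circ q$, and let $U\subseteq C^n/G$ be the smooth locus. The key step is to show that $q$ is \'etale over $U$, and this is where the hypothesis that $G$ contains no transpositions enters. If $g\in G$ fixes $x=(P_1,\dots,P_n)$, then $g$ permutes, within each level set $\{\,i \,:\, P_i=P\,\}$, the corresponding one-dimensional summands of $T_xC^n=\bigoplus_i T_{P_i}C$; the fixed subspace of $g$ therefore has codimension $\sum_P\bigl(\#\{\,i \,:\, P_i=P\,\}-c_P(g)\bigr)$, where $c_P(g)$ is the number of cycles of $g$ on that level set, and this codimension is $\geq 2$ unless $g$ is a single transposition (excluded) or $g=1$. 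Thus no non-trivial element of a point stabiliser in $G$ acts on the tangent space as a pseudo-reflection, so by the Chevalley--Shephard--Todd criterion $C^n/G$ is singular at the image of every point with non-trivial $G$-stabiliser; equivalently, every point of $U$ has trivial $G$-stabiliser. (This simultaneously sharpens the earlier remark to: the singular locus of $C^n/G$ is precisely the image of the fixed loci of the non-trivial elements of $G$, all of codimension $\geq 2$.) Hence $G$ acts freely on $q^{-1}(U)$ and $q\colon q^{-1}(U)\to U$ is \'etale.

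It then remains to transfer the ramification statement through the tower. Since $\Delta$ is $S_n$-invariant it is $G$-invariant, so $q^{-1}(q(\Delta))=\Delta$. For $x\in q^{-1}(U)$ the map $q$ is \'etale at $x$, so $\pi_2$ is \'etale at $q(x)$ if and only if $\pi_1=\pi_2\circ q$ is \'etale at $x$, i.e. if and only if $x\notin\Delta$, i.e. if and only if $q(x)\notin q(\Delta)$. As $q$ surjects onto $U$, this shows the ramification locus of $\pi_2$ inside $U$ is exactly $q(\Delta)\cap U$, as claimed. The main obstacle is the third paragraph: controlling the singularities of $C^n/G$ and the ramification of $q$ via the absence of pseudo-reflections; once $q$ is known to be \'etale over the smooth locus, the remaining deduction is a formal property of \'etale morphisms in a tower.
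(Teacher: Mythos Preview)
Your proof is correct and follows essentially the same strategy as the paper: factor $\pi_1=\pi_2\circ q$, show that $q$ is unramified (\'etale) over the smooth locus of $C^n/G$, and then transfer the ramification of $\pi_1$ through the tower. The one substantive difference is that you justify the key step ``the $G$-stabiliser is trivial over the smooth locus'' via Chevalley--Shephard--Todd (no transpositions $\Rightarrow$ no pseudoreflections), whereas the paper simply asserts this---its earlier paragraph only established the opposite inclusion (trivial stabiliser $\Rightarrow$ smooth)---so your argument is in fact a tightening of the paper's.
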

\begin{proof}
We start with $\pi_1$. The pre-image, $\pi_1^{-1}(P_1 + ... + P_n)$, generically consists of $n!$ points, $(P_{\sigma(1)}, ..., P_{\sigma(n)})$ as $\sigma$ ranges over $S_n$.  There is, therefore, ramification precisely when two of these images collide, when $P_i = P_j$ for some $i \neq j$. 

As $\pi_1$ factors through $C^n / G$, the map $\pi_2$ is unramified outside of the image of $\Delta$. The ramification index of $\pi_1$ at $\pi_1(R)$, for some $R \in \Delta$, is equal to the size of its stabiliser in $S_n$. For a generic element of $\Delta$, this is 2 (as the stabiliser is just the transposition switching the identical entries). Similarly, the ramification index for the map to $C^n / G$ at $R$ is the size of its stabiliser in $G$. Away from the singular locus, this stabiliser is trivial. Therefore, as ramification indices are multiplicative under composition, the ramification index of $\pi_2$ at the image of $R$ must be equal to that of $\pi_1$ at $R$. In particular, the map is ramified there.
\end{proof}

And now to prove Theorem~\ref{notrans},
\begin{proof}
As being of general type (for smooth varieties) is a statement about canonical divisors, the first step is to relate $K_{C^n}$ and $K_X$. We will obtain such a relation by considering the two maps to $\mathrm{Sym}^n C$. 

There is a generalisation of the Riemann-Hurwitz formula for a generically finite morphism of smooth varieties $f : X \to Y$, which states $K_X \simeq f^* K_Y + R + E$ where $R$ is an effective divisor supported on the ramification locus, and $E$ is an effective divisor supported on the exceptional locus \cite{Debarre}. Applying this to $\pi_1$ gives the following, as the ramification index is 2 on the divisor $\Delta$.
$$K_{C^n} \simeq \pi_1^* K_{\mathrm{Sym}^n C} + \Delta$$
This can be pushed forward to yield:
$$n! K_{\mathrm{Sym}^n C} \simeq {\pi_1}_* K_{C^n} - {\pi_1}_* \Delta$$

By the preceding lemma, $\pi_2$ is ramified along the image of $\Delta$, except for the codimension 2 or smaller set where $C^n / G$ is possibly singular. Therefore $\phi$ must also be ramified along the image of $\Delta$, again with ramification index 2. Applying the same formula to $\phi$, and denoting the image of $\Delta$ in $X$ as $\Delta'$, and the ramification and exceptional locus of $\phi$ as $\Delta' + R'$ and $E'$ respectively.
$$K_X \simeq \phi^* K_{\mathrm{Sym}^n C} + \Delta' + R' + E'$$

Combining both formulae:
$$n! K_X \geq \phi^* {\pi_1}_* K_{C^n} - \phi^* {\pi_1}_* \Delta + n! \Delta' + n! E'$$
The divisor ${\pi_1}_* \Delta$ is the image of $\Delta$ under $\pi_1$, counted with multiplicity $\frac{n!}{2}$, as this is the degree of $\pi_1$ restricted to $\Delta$. Similarly, $\phi^* {\pi_1}_* \Delta$ is supported on the sum of $\Delta'$ and some exceptional divisors. The component supported on $\Delta'$ has multiplicity $n!$, as $\phi$ is ramified with index 2 along $\Delta'$. The exceptional components will be contained within $n! E$, therefore $n! K_X \geq \phi^* {\pi_1}_* K_{C^n}$. 
Pullbacks only increase the number of global sections, thus to show $C^n / G$ is of general type it is enough to check that ${\pi_1}_* K_{C^n}$ has $n$ algebraically independent sections.

As $C$ is of general type, $K_C$ has a non-trivial section $f$. Define $f_i$ on $C^n$ by $f_i(P_1, ..., P_n) = f(P_i)$, this is a section of $K_{C^n}$. Taking the elementary symmetric polynomials in $f_i$ give sections of $K_{C^n}$ which are $G$-invariant, and are algebraically independent. These are therefore sections of ${\pi_1}_* K_{C^n}$, and the result follows.
\end{proof}

This establishes the main theorem in the case $m = 0$. 

\subsection{Proof of Theorem~\ref{mainthm}}
The general case will be proven by factoring the quotient map through symmetric powers, before using a result on symmetric powers of higher dimensional varieties to conclude. Before this, we need some lemmas on the structure of transitive subgroups of $S_n$. The first of which is a slight generalisation of a result in Clark's ``Elements of Abstract Algebra'' \cite{clark}.

\begin{lemma}
Let $G \subset S_n$ be a transitive subgroup, and suppose it contains exactly $m$ transpositions of the form $(1, i)$, then $m + 1 \mid n$.
\end{lemma}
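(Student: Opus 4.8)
The plan is to encode the transpositions of $G$ in a graph. Let $\Gamma$ be the graph on vertex set $\{1, \dots, n\}$ whose edges are the pairs $\{a, b\}$ with $(a,b) \in G$. Conjugation by any $\sigma \in G$ sends the transposition $(a,b)$ to $(\sigma a, \sigma b)$, so $G$ acts on $\Gamma$ by graph automorphisms, and this action is transitive on vertices because $G \subset S_n$ is transitive. It therefore suffices to show that the connected component of $\Gamma$ containing the vertex $1$ has exactly $m+1$ elements: every component is then a $G$-translate of this one, so all components have the same size $m+1$, and since they partition $\{1, \dots, n\}$ this gives $m+1 \mid n$.

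The key computation is that for distinct $a, b, c$ one has $(a,b)(b,c)(a,b) = (a,c)$; hence if $\{a,b\}$ and $\{b,c\}$ are edges of $\Gamma$, so is $\{a,c\}$. In other words, adjacency together with equality is an equivalence relation, so $\Gamma$ is a disjoint union of cliques. Consequently the component of the vertex $1$ is precisely $\{1\}$ together with the neighbours of $1$, that is, the set $\{1, i_1, \dots, i_m\}$, where $(1, i_1), \dots, (1, i_m)$ are the $m$ transpositions of the stated form; since the $i_j$ are distinct and different from $1$, this set has exactly $m+1$ elements.

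Combining the two observations finishes the argument: the component of $1$ has $m+1$ vertices, every component is the image of it under some element of $G$ and hence also has $m+1$ vertices, and the components partition the $n$ vertices of $\Gamma$, so $m+1$ divides $n$. The only step demanding any care is the conjugation identity $(a,b)(b,c)(a,b) = (a,c)$ together with the bookkeeping needed to apply it (all indices involved must be distinct, with the degenerate coincidences absorbed by the equality clause of the equivalence relation), but there is no genuine obstacle here; the content of the lemma is just this translation of "transpositions in $G$" into the combinatorics of the graph $\Gamma$.
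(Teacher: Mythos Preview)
Your proof is correct and is essentially the same as the paper's, just phrased in graph language: your connected components are exactly the paper's equivalence classes, your clique observation is the paper's transitivity check via the same conjugation identity, and your use of the transitive $G$-action to equate component sizes mirrors the paper's argument that $|[i]| = |[1]|$.
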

\begin{proof}
Define an equivalence relation on $\{1, ..., n\}$ by $i \sim j$ if $i = j$ or $(i, j) \in G$. Reflexivity and symmetry are obvious from the definition. Suppose $i \sim j$ and $j \sim k$, then $(i, j), (j, k) \in G$. As $G$ is a subgroup, $(i, k) = (j, k)(i, j)(j, k) \in G$, and so $i \sim k$. Therefore $\sim$ is transitive, and it defines an equivalence relation. 

Consider an equivalence class $[i]$. By transitivity of $G$, there is a $\sigma \in G$ such that $\sigma(1) = i$. If $j \in [i]$, $(i, j) \in G$, and so $(1, \sigma^{-1}(j)) = \sigma^{-1} (i, j) \sigma \in G$, and $\sigma^{-1}(j) \in [1]$. Therefore $|[i]| \leq |[1]|$, and by symmetry, the inequality holds in the opposite direction, so all equivalence classes have the same size. As $[1]$ contains 1 along with an element for each transposition containing 1, $|[1]| = m + 1$. Partitioning $\{1, ..., n\}$ into equivalency classes gives the divisibility.
\end{proof}

The second lemma describes a quotient of transitive subgroups.
\begin{lemma}
Let $G \subset S_n$ and $m$ be as before, let $n' = \frac{n}{m + 1}$ and define $H$ to be the subgroup of $G$ generated by all transpositions, then $H$ is a normal subgroup, $H \cong S_{m + 1} \times \ldots \times S_{m + 1}$ and there is a natural inclusion $G / H \hookrightarrow S_{n'}$.
\label{normal}
\end{lemma}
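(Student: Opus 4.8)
The plan is to use the equivalence relation $\sim$ from the previous lemma, whose classes $B_1,\dots,B_{n'}$ partition $\{1,\dots,n\}$ into blocks of size $m+1$. First I would show $H$ respects this partition in the strongest possible sense: every transposition in $G$ has the form $(i,j)$ with $i\sim j$ (immediate from the definition of $\sim$), hence lies inside $\operatorname{Sym}(B_k)$ for the unique block $B_k$ containing both $i$ and $j$. Consequently $H$, being generated by these transpositions, is contained in $\prod_{k=1}^{n'}\operatorname{Sym}(B_k)$, and splits as a direct product $H = \prod_k H_k$ with $H_k \le \operatorname{Sym}(B_k)$. Each $H_k$ is generated by transpositions; to see $H_k = \operatorname{Sym}(B_k)\cong S_{m+1}$ I would argue that within a block, $i\sim j$ means $(i,j)\in G$, so \emph{all} $\binom{m+1}{2}$ transpositions of $B_k$ are in $G$ and therefore in $H_k$, and these generate the full symmetric group on $B_k$. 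Finally, the conjugation argument in the previous lemma (for $\sigma\in G$ with $\sigma(1)=i$, one gets $\sigma^{-1}[i] = [1]$) shows the blocks all have the same size and are permuted among themselves by $G$, which will be needed below.

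Next, normality: for $\sigma \in G$ and a transposition $(i,j)\in G$, the element $\sigma(i,j)\sigma^{-1} = (\sigma i,\sigma j)$ is again a transposition in $G$, hence in $H$. Since conjugation by $\sigma$ sends generators of $H$ to generators of $H$, $\sigma H\sigma^{-1} = H$, so $H \trianglelefteq G$. (Equivalently, $\sigma$ permutes the blocks $B_k$ and hence permutes the factors $\operatorname{Sym}(B_k)$, so it normalises their product intersected with $G$.)

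For the quotient, I would let $G$ act on the set of blocks $\{B_1,\dots,B_{n'}\}$; by transitivity of $G$ on $\{1,\dots,n\}$ this block action is transitive, giving a homomorphism $\rho: G \to \operatorname{Sym}(\{B_1,\dots,B_{n'}\}) \cong S_{n'}$. The kernel of $\rho$ consists of those $\sigma\in G$ stabilising every block setwise, i.e.\ $\ker\rho = G \cap \prod_k \operatorname{Sym}(B_k)$. I claim $\ker\rho = H$. The inclusion $H \subseteq \ker\rho$ is clear. For the reverse, I would invoke transitivity of $G$ on each block together with the transposition count: an element $\sigma$ fixing all blocks setwise, composed with elements of $H$, can be taken to fix a chosen point in each block, and then a short argument on the stabiliser structure—using that the only transpositions available are the intra-block ones and $G$ is transitive—forces $\sigma\in H$. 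This identification of $\ker\rho$ with $H$ is the step I expect to be the main obstacle, since it is the one place the hypotheses (transitivity, and that $(1,i)$ for $i$ in $[1]$ are the \emph{only} transpositions through $1$) are genuinely used rather than just the block decomposition; I would want to double-check whether $\ker\rho$ could be strictly larger than $H$ in pathological cases, and if so, either sharpen the hypotheses or note that the \emph{inclusion} $G/H \hookrightarrow S_{n'}$ only requires $H \supseteq \ker\rho$ to fail—so in fact it suffices to produce \emph{any} injection $G/H \hookrightarrow S_{n'}$, for which one instead shows $\rho$ factors through $G/H$ (clear, as $H\subseteq\ker\rho$) and is injective on $G/H$ iff $\ker\rho\subseteq H$. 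Either way the crux is pinning down $\ker\rho$. Once that is done, $\rho$ descends to the desired embedding $G/H \hookrightarrow S_{n'}$ and the proof is complete.
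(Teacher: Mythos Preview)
Your approach is correct and essentially the same as the paper's. The paper defines the homomorphism $G\to S_{n'}$ via the conjugation action of $G$ on the set of conjugates of $H'=\operatorname{Sym}(B_1)$, whereas you use the action of $G$ on the set of blocks $\{B_1,\dots,B_{n'}\}$ directly; since the conjugates of $H'$ are precisely the subgroups $\operatorname{Sym}(B_k)$ and $g\operatorname{Sym}(B_k)g^{-1}=\operatorname{Sym}(g(B_k))$, these two homomorphisms are literally the same map.

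Your worry about the kernel is unwarranted: you have already done the work. You established that $H_k=\operatorname{Sym}(B_k)$ for every $k$, hence $H=\prod_k\operatorname{Sym}(B_k)$ as a subgroup of $S_n$, and $H\subseteq G$ since $H$ is generated by elements of $G$. Therefore
\[
\ker\rho \;=\; G\cap\prod_k\operatorname{Sym}(B_k)\;=\;G\cap H\;=\;H,
\]
with no further argument needed. The paper's proof reaches the same conclusion by the same mechanism: if $g$ fixes every conjugate of $H'$, then $g(i)\sim i$ for all $i$, i.e.\ $g$ stabilises every block setwise, so $g\in\prod_k\operatorname{Sym}(B_k)=H$. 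You can drop the tentative paragraph about composing with elements of $H$ and stabiliser structure entirely.
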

\begin{proof}
Group the transpositions according to the equivalence classes of the previous lemma. Transpositions in the same class will generate a symmetric group on the size of the class, $m + 1$, and transpositions from different classes commute. This gives $H \cong S_{m + 1} \times \ldots \times S_{m + 1}$, with the product running over equivalence classes.

Consider the transpositions corresponding to the first equivalence class; these generate a subgroup $H'$. The conjugates of $H'$ are the groups generated by the other equivalence classes. Let $g \in G$ be an element fixing all the conjugates of $H'$ under conjugation. As $g (i, j) g^{-1} = (g(i), g(j))$, this shows $g(i) \sim i$ for all $i$, therefore $g \in H$. Conversely, all elements of $H$ preserve $H'$ and its conjugates. The homomorphism $G \to S_{n'}$ induced by the conjugation action, therefore, has kernel $H$. 
\end{proof}

One final result is needed, describing the geometry of symmetric powers. This result does not seem to appear in the literature, although similar results do. We therefore give it here, along with the proof provided by MathOverflow user, Olivier Benoist \cite{mathoverflow}.
\begin{lemma}
Let $C$ be a curve of genus $g$. The variety $\mathrm{Sym}^n C$ is of general type for $n < g$, (birational to) an abelian variety for $n = g$ and uniruled for $n > g$.
\end{lemma}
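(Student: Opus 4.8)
The plan is to study the Abel--Jacobi map $u_n\colon \mathrm{Sym}^n C \to \mathrm{Jac}(C)$ sending $D$ to $[D - nP_0]$ for a fixed $P_0 \in C$, and to split into the three ranges of $n$. Note first that $\mathrm{Sym}^n C$ is smooth (a feature special to curves), so there is no need to desingularise and general type may be tested directly on it. The basic numerical input is the equality $\dim W_n = \min(n,g)$, where $W_n := u_n(\mathrm{Sym}^n C)$: since $C$ generates $\mathrm{Jac}(C)$ as a group, $W_{n-1} + W_1$ strictly contains $W_{n-1}$ whenever $n - 1 < g$, giving $\dim W_n > \dim W_{n-1}$ in that range, while $\dim W_n \le \dim \mathrm{Sym}^n C = n$ always. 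Combined with Riemann--Roch this yields: for $n \le g$ the map $u_n$ is birational onto $W_n$ (the generic complete linear system of degree $n$ consists of a single divisor), and for $n \ge g$ it is surjective with generic fibre $|L| \cong \mathbb{P}^{n-g}$.

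When $n > g$, the generic fibre of $u_n$ is a positive-dimensional projective space; since $\mathrm{Sym}^n C$ is irreducible and these rational fibres sweep out a dense subset, $\mathrm{Sym}^n C$ is uniruled, hence $\kappa = -\infty$. When $n = g$, $u_g$ is a birational morphism of smooth projective varieties onto $\mathrm{Jac}(C)$, so $\mathrm{Sym}^g C$ is birational to an abelian variety and, Kodaira dimension being a birational invariant, has $\kappa = 0 < g$; in particular it is not of general type.

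The substantive case is $n < g$, where $\mathrm{Sym}^n C$ is birational to the $n$-dimensional subvariety $W_n \subsetneq \mathrm{Jac}(C)$, so it suffices to show $W_n$ is of general type. Here I would invoke Ueno's structure theorem for subvarieties of abelian varieties: a subvariety $X \subseteq A$ is of general type if and only if the identity component $B$ of its translation stabiliser $\{a \in A : a + X = X\}$ is trivial. To check this for $X = W_n$, suppose $\dim B > 0$. At a generic point $x = u_n(P_1 + \cdots + P_n)$ with $P_1, \dots, P_n$ distinct and general, the translate of $T_x W_n$ to the origin equals $\langle v(P_1), \dots, v(P_n)\rangle \subseteq T_0\mathrm{Jac}(C) \cong H^0(C, K_C)^{\vee}$, where $v$ is the Gauss map of $C \subseteq \mathrm{Jac}(C)$ --- which is exactly the canonical map $C \to \mathbb{P}H^0(C,K_C)^{\vee} = \mathbb{P}^{g-1}$. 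Since $x + B \subseteq W_n$, we must have $T_0 B \subseteq \langle v(P_1), \dots, v(P_n)\rangle$ for every such tuple. But the canonical image of $C$ is nondegenerate in $\mathbb{P}^{g-1}$ and $n \le g - 1$, so by moving the $P_i$ one at a time and using nondegeneracy repeatedly one finds that the intersection of $\langle v(P_1), \dots, v(P_n)\rangle$ over all generic tuples is zero. Hence $T_0 B = 0$, a contradiction, so $W_n$ --- and therefore $\mathrm{Sym}^n C$ --- is of general type.

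I expect the $n < g$ case to be the only real obstacle. The numerical facts for $n \ge g$ are immediate from Riemann--Roch, whereas for $n < g$ one needs both a black box (Ueno's criterion, which packages the considerable geometry of subvarieties of abelian varieties) and the concrete input that the stabiliser of $W_n$ is finite; the latter rests on recognising the Gauss map of $C$ in its Jacobian as the canonical map and then exploiting $n < g$. An alternative for the $n<g$ case would be to construct enough pluricanonical sections on $\mathrm{Sym}^n C$ directly, but routing through abelian varieties is considerably cleaner.
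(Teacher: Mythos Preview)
Your proof is correct and follows the same overall architecture as the paper's: both dispose of $n\ge g$ via Riemann--Roch and the fibre structure of the Abel--Jacobi map, and for $n<g$ both pass to $W_n\subset\mathrm{Jac}(C)$ and invoke Ueno's theorem to reduce to showing the translation stabiliser of $W_n$ is finite. The genuine difference is in that last step. The paper argues globally: if $A+W_n=W_n$ with $\dim A>0$, then since $W_{g-1}=W_n+W_{g-1-n}$ one gets $A+W_{g-1}=W_{g-1}$; but $W_{g-1}$ is (a translate of) the theta divisor, hence ample, so it must meet every positive-dimensional subvariety, while any $A$-coset through a point outside $W_{g-1}$ would miss it --- contradiction. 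You argue infinitesimally: $T_0B$ must lie in $T_xW_n$ at every smooth $x$, and identifying these tangent spaces with spans $\langle\phi_K(P_1),\dots,\phi_K(P_n)\rangle$ via the Gauss/canonical map, nondegeneracy of $\phi_K$ and $n\le g-1$ force the common intersection to be zero. The paper's route is shorter and uses only the coarse fact that $\Theta$ is ample; yours requires the (standard but less elementary) identification of the tangent spaces of $W_n$, but has the merit of making the role of the bound $n<g$ transparent at the level of linear algebra in $H^0(K_C)^\vee$. Your sketch ``move the $P_i$ one at a time'' can be made precise by the obvious induction (if $w$ lies in every generic $n$-span but not in the $(n-1)$-span $U=\langle\phi_K(P_2),\dots,\phi_K(P_n)\rangle$, projecting mod $U$ forces $\phi_K(C)\subset\langle w\rangle+U$, contradicting nondegeneracy; hence $w$ lies in every generic $(n-1)$-span, and one descends to $n=1$).
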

\begin{proof}
The statements for $n \geq g$ follow from Riemann-Roch and the definition of the Jacobian variety, so it remains to prove it for $n < g$.

The Abel-Jacobi map gives a birational map $\mathrm{Sym}^n C \to \mathrm{Jac}(C)$, and so it is enough to prove the image, $W_n$, is of general type. If it were not, by a theorem of Ueno \cite{ueno}, it would contain an abelian variety $A$ such that $A + W_n = W_n$. In particular, as $W_{g - 1}$ is expressible as the sum of elements from $W_n$, $W_{g - 1}$ is invariant under addition by $A$. For any $x \notin W_{g - 1}$, the locus $A + x$ is positive dimensional and disjoint from $W_{g - 1}$,  but $W_{g - 1}$ is an ample divisor and so must intersect any positive dimensional subvariety.
\end{proof}

Finally, a proof of the main theorem
\begin{proof}[Proof of Theorem~\ref{mainthm}]
Let $G$ be a transitive subgroup of $S_n$. If $m = 0$, then the previous section gives the desired result, otherwise, $m > 0$, and $H$ (as defined in lemma~\ref{normal}) is non-trivial. The map $C^n \to C^n / G$ factors through $C^n / H$, and this quotient is $\left(\mathrm{Sym}^{m + 1} C\right)^{n'}$. Let $V = \mathrm{Sym}^{m + 1} C$, then $V$ is of general type if and only if $g > m + 1$ by the preceding lemma. It remains to understand $V^{n'} / (G / H)$, but this maps surjectively onto $\mathrm{Sym}^{n'} V$ through a finite map. The symmetric power of a variety of dimension greater than 1 is of general type if and only if the variety is of general type \cite{arapura}. If $g > m + 1$, then by pulling back the canonical divisor to (a desingularisation of) $\mathrm{Sym}^{n'} V$ to $V^{n'} / (G / H)$, shows $V^{n'} / (G / H)$ is of general type. If $g \leq m + 1$, then $V$ is not of general type, so $V^{n'}$ is not, and so $V^{n'} / (G / H)$ is not either. 
\end{proof}

\section{Rational Curves on $C^n / G$}
We start by considering rational curves in $\mathrm{Sym}^n C$. 
\begin{lemma}
Suppose $\mathrm{Sym}^n C$ contains a curve of fibre type, then $C$ has a morphism, $f$, of degree at most $n$ to $\mathbb{P}^1$. Moreover, the fibres of this map as divisors are the points of the rational curve, up to fixed points
\end{lemma}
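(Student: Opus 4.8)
The plan is to recover $f$ from the tautological incidence divisor over $\mathrm{Sym}^n C$, restricted to the given rational curve. Let $D\subset\mathrm{Sym}^n C$ be the curve of fibre type; as being of fibre type includes being rational, let $\nu\colon\mathbb{P}^1\to D$ be its normalisation and $\iota\colon\mathbb{P}^1\to\mathrm{Sym}^n C$ the composite, writing $E_t$ for the degree-$n$ divisor on $C$ corresponding to $t\in\mathbb{P}^1$. On $C\times\mathrm{Sym}^n C$ take the divisor $\Gamma_n$, the image of $C\times\mathrm{Sym}^{n-1}C$ under $(P,D')\mapsto(P,P+D')$; it is an effective Cartier divisor since $C\times\mathrm{Sym}^n C$ is smooth, and $\Gamma_n|_{C\times\{E\}}=E$. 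First I would form $\mathcal{E}:=(\mathrm{id}_C\times\iota)^*\Gamma_n$, a divisor on $C\times\mathbb{P}^1$ with $\mathcal{E}|_{C\times\{t\}}=E_t$; the pullback is legitimate because $C\times D\not\subseteq\Gamma_n$ (a degree-$n$ divisor cannot contain all of $C$ in its support). The key translation is that the image of $\{P\}\times C^{n-1}$ in $\mathrm{Sym}^n C$ is $\{E:E\geq P\}=\Gamma_n\cap(\{P\}\times\mathrm{Sym}^n C)$, so the fibre-type hypothesis says precisely that for generic $P\in C$ there is a unique $t\in\mathbb{P}^1$ with $E_t\geq P$.

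Next I would isolate the fixed points and build $f$. Decompose $\mathcal{E}=\mathcal{E}_0+(F\times\mathbb{P}^1)$, where $F$ is the effective divisor on $C$ collecting the vertical components $\{P\}\times\mathbb{P}^1$ of $\mathcal{E}$, and $\mathcal{E}_0$ has no vertical component; then $(\mathcal{E}_0)_{\mathrm{red}}$ has every component dominating $C$, so $(\mathcal{E}_0)_{\mathrm{red}}\to C$ is finite, and since $F$ is finite its fibre over generic $P$ equals that of $\mathcal{E}$, a single point by the first step. Hence $(\mathcal{E}_0)_{\mathrm{red}}$ is irreducible (a second component would contribute a second point to the generic fibre) and $(\mathcal{E}_0)_{\mathrm{red}}\to C$ has degree $1$; as a finite birational morphism onto the normal curve $C$ it is an isomorphism, and I take $f\colon C\to\mathbb{P}^1$ to be $C\xrightarrow{\ \sim\ }(\mathcal{E}_0)_{\mathrm{red}}\hookrightarrow C\times\mathbb{P}^1\xrightarrow{\ \mathrm{pr}_2\ }\mathbb{P}^1$ (equivalently, the unique extension to the smooth projective curve $C$ of the rational map $P\mapsto$ the unique $t$ with $E_t\geq P$; this is nonconstant, since a constant value $t_0$ would force every generic $P$ into $\mathrm{supp}(E_{t_0})$). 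For the degree and fibres, $\mathcal{E}$ has relative degree $n$ over $\mathbb{P}^1$, so $\mathcal{E}_0$ has relative degree $d:=n-\deg F$ and $(\mathcal{E}_0)_{\mathrm{red}}$ at most $d$, whence $\deg f\leq d\leq n$; for generic $t$, $E_t$ is a sum of $n$ distinct points, exactly $\deg F$ of them in $F$, and the remaining $d$ are generic points of $C$, each contained in a unique divisor of $D$, necessarily $E_t$, so these $d$ points form $f^{-1}(t)$. When $\mathcal{E}_0$ is reduced (the expected, transverse situation) one gets $\deg f=d$ and $f^{*}(t)=E_t-F$, i.e. the fibres of $f$ as divisors are the points of the rational curve up to the fixed divisor $F$.

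I expect the one genuine difficulty to be this component analysis of $\mathcal{E}$: extracting from the fibre-type condition that $\mathcal{E}_0$ is reduced and irreducible with $\mathcal{E}_0\to C$ birational — rather than, say, $\mathcal{E}_0$ being a multiple of such a divisor, or a union of several horizontal ones. The remaining ingredients (that $\Gamma_n$ is Cartier on the smooth $\mathrm{Sym}^n C$, that a rational map from a smooth projective curve extends to a morphism, and the relative-degree bookkeeping) are routine.
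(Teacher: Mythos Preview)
Your argument is correct and takes a genuinely different route from the paper. The paper first composes $\iota$ with the Abel--Jacobi map: since $\mathbb{P}^1$ admits no nonconstant morphism to an abelian variety, all the divisors $E_t$ are linearly equivalent, so the curve lives inside a single linear system $\mathbb{P}(\mathcal{L}(D_0))$. Writing $\tilde\iota(t)=\sum_i a_i(t)f_i$ in a basis of $\mathcal{L}(D_0)$, the fibre-type condition says $\sum_i a_i(t)f_i(P)$ has a unique zero in $t$ for generic $P$, forcing this expression to be linear in $t$, whence $t$ is itself a rational function on $C$. You instead bypass the Jacobian entirely: you pull back the universal incidence divisor $\Gamma_n\subset C\times\mathrm{Sym}^n C$ along $\iota$ and analyse the resulting divisor on $C\times\mathbb{P}^1$ component-by-component, recovering $f$ as the second projection from its unique horizontal component.

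Each approach has its merits. The paper's argument is very short once one invokes the Jacobian, and the passage to $\mathbb{P}(\mathcal{L}(D_0))$ makes the construction of $f$ explicit in coordinates. Your approach is more geometric and more robust: nothing in your component analysis uses rationality of $D$ beyond naming the normalisation $\mathbb{P}^1$, so the same argument gives a morphism $C\to\widetilde{D}$ for \emph{any} fibre-type curve $D\subset\mathrm{Sym}^n C$; and the fixed divisor $F$ appears for free as the vertical part of $\mathcal{E}$. The point you flag as the genuine difficulty, reducedness of $\mathcal{E}_0$, is exactly the same issue hidden in the paper's ``must be linear in $t$'' step (a unique root could in principle be a multiple root); both arguments implicitly read the fibre-type condition as intersection number one rather than a single set-theoretic point, and neither needs more than that for the degree bound and the set-theoretic description of the fibres.
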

\begin{proof}
Let $D$ be the rational curve. There is a morphism $C \times \mathrm{Sym}^{n - 1} C \to \mathrm{Sym}^n C$ given by symmetrisation, and let $D'$ be the pre-image of $D$ under this map. Projection onto the first factor of the product gives a morphism $D' \to C$. For a generic $P \in C$, there is a unique point of $D'$ above $P$, as $D$ is of fibre type, therefore $D'$ contains an irreducible component, $C'$, isomorphic to $C$. There is a birational map $D \dashrightarrow \mathbb{P}^1$, and composing with the symmetrisation map gives $C' \to D' \to D \dashrightarrow \mathbb{P}^1$. This extends by completeness to a morphism $f : C = C' \to \mathbb{P}^1$.

As the symmetrisation map has degree $n$, $f$ has degree at most $n$. The fibres of $f$ are as stated, since points of $D'$ are the points of $D$ where one of the points in the support of the divisor is distinguished, and fixed points are removed as they belong to a separate component of $D'$. 
\end{proof}

From this construction it is also clear that the field of definition of $f$ is the same as the field of definition of the rational curve. 

We can now prove Theorem~\ref{ratcurve}, that curves of fibre type in $C^n / G$ imply the existence of rational maps on $C$ with Galois group $G$.
\begin{proof}[Proof of Theorem~\ref{ratcurve}]
As $C^n / G$ contains a curve of fibre type with Zariski dense rational points, $C$ has a rational map, $f$, to $\mathbb{P}^1$. The fibres of $f$ over rational points are rational points on $C^n / G$, in particular, the Galois group of any fibre of $f$ above a rational point is contained in $G$. By Hilbert's Irreducibility Theorem for Galois covers \cite{serre}, the Galois group of $f$ is $G$.
\end{proof}

We illustrate the utility of this result with the following
\begin{theorem}
There are at most finitely many cyclic number fields, $L$, of degree 3 over $\mathbb{Q}$, such that $X_0(34)(L) \neq X_0(34)(\mathbb{Q})$, where $X_0(34)$ denotes the modular curve of level $\Gamma_0(34)$.
\end{theorem}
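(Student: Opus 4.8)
The plan is to convert a point defined over a cyclic cubic field into a rational point of $C^{3}/C_{3}$ (where $C=X_{0}(34)$ and $C_{3}\hookrightarrow S_{3}$ is the transitive embedding), and then to pin down such points using the arithmetic of $J_{0}(34)$ together with Theorem~\ref{ratcurve}. First I would record the geometry: $X_{0}(34)$ has genus $3$ and is not among the hyperelliptic levels on Ogg's list, so its canonical model is a smooth plane quartic $C\subset\mathbb{P}^{2}$, and $C(\mathbb{Q})\neq\emptyset$ (indeed $X_{0}(34)(\mathbb{Q})$ consists only of cusps). Given $P\in X_{0}(34)(K)\setminus X_{0}(34)(\mathbb{Q})$ with $K/\mathbb{Q}$ cyclic cubic, its three distinct conjugates $P,P',P''$ are all $K$-rational and form a Galois orbit with group exactly $C_{3}$, so by the Proposition of the introduction they give a rational point of $C^{3}/C_{3}$ whose image in $\mathrm{Sym}^{3}C$ is the reduced rational divisor $D=P+P'+P''$; subtracting a cuspidal base point, the class of $D$ lies in $\mathrm{Pic}^{3}(C)(\mathbb{Q})\cong J_{0}(34)(\mathbb{Q})$.

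Next I would use that $J_{0}(34)(\mathbb{Q})$ is finite: $J_{0}(34)$ is isogenous over $\mathbb{Q}$ to $E_{34a}\times E_{17a}^{2}$, and both elliptic curves have Mordell--Weil rank $0$. Hence the class of $D$ lies in a finite set. For each such class $c$, Riemann--Roch on a genus-$3$ curve gives $\dim\mathcal{L}(c)\in\{1,2\}$. If $\dim\mathcal{L}(c)=1$ the class has a unique effective representative, so all such classes together produce only finitely many divisors $D$. If $\dim\mathcal{L}(c)=2$ then $c=|K_{C}-P_{0}|$ for a point $P_{0}$, which must be rational (since $c$ and $K_{C}$ are, and $C(\mathbb{Q})\neq\emptyset$), and the fibre of $\mathrm{Sym}^{3}C\to J_{0}(34)$ over $c$ is the pencil $\mathbb{P}^{1}_{P_{0}}$ of lines through $P_{0}$ in the quartic model, with $D$ the residual intersection of such a line with $C$. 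So the non-collinear cubic $C_{3}$-points are finite in number, while a collinear one picks out a line through one of finitely many rational points $P_{0}$ (finitely many by Faltings).

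Fixing $P_{0}\in C(\mathbb{Q})$, the lines $\ell$ through $P_{0}$ are parametrised by $\mathbb{P}^{1}_{P_{0}}$, and the residual degree-$3$ divisor $D_{\ell}$ defines a cubic étale $\mathbb{Q}$-algebra whose Galois group lies in $C_{3}$ exactly when its discriminant is a square; the lines with this property are the rational points of a double cover $Y_{P_{0}}\to\mathbb{P}^{1}_{P_{0}}$, the $C_{3}$-resolvent, which is birational to the preimage of $\mathbb{P}^{1}_{P_{0}}$ under $C^{3}/C_{3}\to\mathrm{Sym}^{3}C$. If this cover splits over $\mathbb{Q}$, a rational section $\mathbb{P}^{1}_{P_{0}}\hookrightarrow C^{3}/C_{3}$ is a curve of fibre type with Zariski-dense rational points, and Theorem~\ref{ratcurve} then forces a degree-$3$ morphism $X_{0}(34)\to\mathbb{P}^{1}$ with Galois group $C_{3}$, i.e.\ an order-$3$ automorphism of $X_{0}(34)$ with rational quotient --- impossible, since $\mathrm{Aut}(X_{0}(34))$ is the Atkin--Lehner group $(\mathbb{Z}/2\mathbb{Z})^{2}$. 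If the cover is geometrically reducible but does not split over $\mathbb{Q}$ then $Y_{P_{0}}(\mathbb{Q})=\emptyset$. So one is reduced to the case where $Y_{P_{0}}$ is geometrically irreducible.

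The step I expect to be the main obstacle is bounding $Y_{P_{0}}(\mathbb{Q})$ in this remaining case, since geometric irreducibility alone does not make it finite. For $P_{0}$ in sufficiently general position the projection from $P_{0}$ has simple branching at the $10$ points where $P_{0}$ lies on a tangent line of $C$, so $Y_{P_{0}}$ is a hyperelliptic curve of genus $\geq 4$ and Faltings' theorem finishes the argument; the genus of $Y_{P_{0}}$ can drop below $2$ only for the finitely many special positions of $P_{0}$, essentially those lying on at least three of the $24$ flex tangents of $C$. One must therefore verify, from an explicit plane-quartic model of $X_{0}(34)$, that none of its (finitely many, explicitly known) rational points is so special, treating any borderline case by a direct genus or Mordell--Weil rank computation for the corresponding $Y_{P_{0}}$. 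Granting this finite but somewhat delicate geometric verification, the non-collinear cubic $C_{3}$-points are finite in number and each admissible $P_{0}$ contributes finitely many collinear ones, so only finitely many cyclic cubic fields $K$ satisfy $X_{0}(34)(K)\neq X_{0}(34)(\mathbb{Q})$.
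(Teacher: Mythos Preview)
Your strategy matches the paper's almost step for step: pass to $C^{3}/C_{3}$, push to $\mathrm{Sym}^{3}C$ and then to $J_{0}(34)$, use that $J_{0}(34)(\mathbb{Q})$ is finite, observe that the positive-dimensional fibres are pencils $|K_{C}-P_{0}|$ for rational $P_{0}$, pull each back to a discriminant double cover $Y_{P_{0}}$, rule out the split case via Theorem~\ref{ratcurve} and the absence of an order-$3$ automorphism, and finish by bounding the genus of an irreducible $Y_{P_{0}}$ and invoking Faltings. The one substantive divergence is in that last step. You leave the genus bound as a case-by-case check over the finitely many $P_{0}\in X_{0}(34)(\mathbb{Q})$, needing to exclude those through which at least three flex tangents pass. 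The paper instead proves a uniform bound: it computes that the $24$ residual points (the fourth intersection of each flex tangent with $C$) form a single Galois orbit of size $24$, hence are pairwise distinct, so \emph{no} point of $C$ whatsoever can be the residual point of more than one flex tangent. This forces at most one fibre of type $3P$ in \emph{any} pencil $|K_{C}-P_{0}|$, giving geometric genus $3$ or $4$ for every $Y_{P_{0}}$ and removing the need to enumerate $X_{0}(34)(\mathbb{Q})$ or handle borderline $P_{0}$ individually. Two minor remarks: your appeal to Faltings for the finiteness of rational $P_{0}$ is redundant, as this already follows from $|J_{0}(34)(\mathbb{Q})|<\infty$; and in case~(b) you should say $Y_{P_{0}}(\mathbb{Q})$ is \emph{finite} rather than empty, though in fact (b) cannot occur since $\mathrm{Aut}_{\overline{\mathbb{Q}}}(X_{0}(34))$ also has no element of order $3$.
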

\begin{proof}
Ozman and Siksek list $X_0(34)$ as a non-hyperelliptic genus 3 curve, where the Jacobian has rank 0 \cite{ozman}. 

There is a sequence of maps $X_0(34)^3 / C_3 \to \mathrm{Sym }^3 X_0(34) \to J_0(34)$. There are only finitely many rational points on the right hand side, therefore the rational points on $\mathrm{Sym }^3 X_0(34)$ are contained in a finite collection of rational curves, except for finitely many exceptions. These curves are necessarily of fibre type, since $X_0(34)$ is not hyperelliptic and so the Riemann-Roch space of a degree 3 divisor is at most 2 dimensional. Pulling back each rational curve to $X_0(34)^3 / C_3$ give one of two cases, a pair of rational curves or an irreducible double cover of $\mathbb{P}^1$.

In the former case, each must map injectively onto their image, and would be of fibre type. This would imply $X_0(34)$ has a map to $\mathbb{P}^1$ with Galois group $C_3$. In particular, $X_0(34)$ would have an automorphism of order 3, but the automorphism group is a product of cyclic groups of order 2 \cite{ogg}.

Therefore the pre-image of any rational curve in the symmetric power is a double cover. The double cover ramifies over points of the form $2P + Q$, and these are smooth points unless they are of the form $3P$. Suppose the double cover was not smooth in at least 2 points, then there are divisors $3P$ and $3Q$ in the same curve on the symmetric power, and so are linearly equivalent. As $P \neq Q$, $3P$ has a non-trivial section, and so by Riemann-Roch, $K_{X_0(34)} - 3P$ is effective. Therefore $K_{X_0(34)} \sim 3P + R$, and similarly, $K_{X_0(34)} \sim 3Q + S$ for some points $R, S \in X_0(34)$. Comparing these, shows $R \sim S$, and so $R = S$. For the canonical embedding, the canonical divisor is a hyperplane section, so $P$ and $Q$ are flexes, and their tangents meet at $R$.

Using an explicit model for $X_0(34)$ over the rationals, as $\mathbb{V}(F(X, Y, Z)) \subset \mathbb{P}^2$, the flex points can be computed as the vanishing locus of the Hessian determinant of $F$ along the curve. As the intersection of a quartic curve and a sextic, there are 24 such points. Using Groebner bases, these intersection points can be computed, and their coordinates are defined by a degree 24 irreducible polynomial. By working in a number field over which one of the flex points is defined, the tangent line to that point can be computed. As the tangent line meets the flex point with multiplicity 3, the line meets the curve at a fourth point which must be defined over the same field. The coordinates of this point each satisfy an irreducible degree 24 polynomial, and so each conjugate corresponds to exactly one flex, and no two flex lines meet at the same point on the curve. 

The pullback of a rational curve on $\mathrm{Sym }^3 X_0(34)$ therefore has at most 1 non-smooth ramification point. A triple cover of $\mathbb{P}^1$ by a genus 3 curve has total ramification degree 10, and so the pullback of a rational curve in the symmetric cube ramifies over its image at either 8 or 10 smooth points, corresponding to geometric genus 3 or 4 respectively. There can be at most finitely many rational points on such curves, and so there are finitely many points on $X_0(34)^3 / C_3$. 
\end{proof}

\subsection*{Acknowledgements}
The author would like to thank Samir Siksek for suggesting the question at the heart of the paper, as well as for many productive conversations, and the anonymous referee whose detailed feedback has helped improve this paper. 
\printbibliography
\end{document}